\documentclass[11pt,reqno]{amsart}

\usepackage[T1]{fontenc}
\usepackage{lmodern}
\usepackage{amsmath,amssymb,amsthm,mathtools}
\usepackage{esint}
\usepackage{microtype}
\usepackage{cite}
\usepackage[hidelinks]{hyperref}
\usepackage[a4paper,margin=1in]{geometry}

\numberwithin{equation}{section}

\newtheorem{theorem}{Theorem}[section]
\newtheorem{lemma}[theorem]{Lemma}
\newtheorem{proposition}[theorem]{Proposition}

\newcommand{\Sph}{\mathbb S}
\newcommand{\tr}{\operatorname{tr}}
\newcommand{\Vol}{\operatorname{Vol}}

\newcommand{\Ric}{\operatorname{Ric}}
\newcommand{\diver}{\operatorname{div}}
\newcommand{\ip}[2]{\langle #1,#2\rangle}
\newcommand{\norm}[1]{\lvert #1\rvert}

\title[The second gap with constant cubic trace]{The Second Gap for Minimal Hypersurfaces in the Unit Sphere with Constant Cubic Trace}
\author[S. Guan]{Shanlin Guan}
\address{School of Mathematical Sciences, Chongqing Normal University,
Chongqing 401331, P. R. of China}
\email{20269003@cqnu.edu.cn}
\date{}
\hypersetup{pdftitle={The Second Gap for Minimal Hypersurfaces in the Unit Sphere with Constant Cubic Trace},pdfauthor={Shanlin Guan}}
\subjclass[2020]{53C42, 53C24}
\keywords{minimal hypersurface, constant scalar curvature, Chern conjecture, cubic trace, second gap}

\begin{document}

\begin{abstract}
Let $M^n$, $n\ge3$, be a closed oriented minimal hypersurface in the unit sphere with constant scalar curvature and constant $f_3=\tr(h^3)$, where $h$ is the second fundamental form. We prove that $S=\norm h^2>n$ implies $S\ge2n$. Equality holds if and only if each connected component is locally congruent to a Cartan minimal isoparametric hypersurface with three distinct principal curvatures.
\end{abstract}

\maketitle

\section{Introduction}

Let $M^n$ be a closed oriented minimal hypersurface immersed in $\Sph^{n+1}(1)$, and let $h$ denote its second fundamental form with respect to a fixed global unit normal. Identifying $h$ with the corresponding shape operator, we write
\[
S=\norm h^2,\qquad f_r=\tr(h^r).
\]
The Gauss equation expresses the scalar curvature as $n(n-1)-S$, so constancy of the scalar curvature is equivalent to constancy of $S$. In its hypersurface form, Chern's conjecture asserts that, for fixed $n$, the possible constant values of $S$ form a discrete set; see \cite{Yau1982}. The stronger form of the conjecture asserts that every such hypersurface is isoparametric. The minimal isoparametric hypersurfaces satisfy $S=(g-1)n$, where the number $g$ of distinct principal curvatures belongs to $\{1,2,3,4,6\}$ by M\"unzner's theorem \cite{Munzner1980,Munzner1981}. Thus their possible values of $S$ are $0,n,2n,3n$, and $5n$, with the usual restrictions on the multiplicities. For the relation with isoparametric geometry, see \cite{Cecil2008,GeTang2012,ScherfnerWeissYau2012}.

Simons' identity excludes the interval $0<S<n$ \cite{Simons}, and the case $S=n$ is characterized by the Clifford minimal hypersurfaces \cite{CdCK,Lawson}. The next expected gap is $(n,2n)$. Peng and Terng established a positive gap above $n$ and proved the sharp implication $S>3\Rightarrow S\ge6$ in dimension three \cite{PTseminar,PTannalen}; Chang subsequently completed the three-dimensional classification \cite{Chang1993}. In higher dimensions, successive refinements of the Peng--Terng estimates have enlarged the excluded interval. Under the assumption $S>n$, Yang and Cheng obtained $S\ge4n/3$ \cite{YangCheng}, Suh and Yang obtained $S>10n/7$ \cite{SuhYang}, and a recent result of Chen gives $S>85n/59$ for $n\ge4$ \cite{Chen2026}. For further pinching results, see \cite{YangCheng1994,WeiXu2007,Zhang2010,DingXin2011,XuXu2017}.

The assumption that $f_3$ is constant is suggested by the hypotheses themselves: minimality gives $f_1=0$, while constant scalar curvature fixes $f_2=S$, leaving $f_3$ as the first power sum not prescribed. With this additional assumption, Yang and Cheng proved, for $n\ge4$, that $S>5n/3$ whenever $S>n$ \cite{YangCheng}. Cheng, Wei and Yamashiro improved the bound to $S>1.8252n-0.712898$ for $n\ge5$, under the weaker assumption of completeness \cite{CWY}. In the minimal Willmore case, where $f_3=0$, Ge, Tan, Yan and Zhang obtained an approximate second gap and proved the exact bound $S\ge2n$ under an additional lower bound for $f_4$ \cite{GeTanYanZhang}. For general $n$, these estimates still leave part of $(n,2n)$ open when only constancy of $f_3$ is assumed.

Prescribing further power sums leads to isoparametric rigidity theorems. De Almeida and Brito proved that a closed three-dimensional hypersurface with constant mean curvature and constant nonnegative scalar curvature is isoparametric \cite{deAlmeidaBrito}. Tang, Wei and Yan extended this conclusion to higher dimensions under constancy of $f_1,\ldots,f_{n-1}$, nonnegative total scalar curvature, and the assumption that all principal curvatures are distinct at every point \cite{TangWeiYan2020}. Tang and Yan removed the last assumption: a closed hypersurface of dimension $n\ge4$ is isoparametric if its scalar curvature is nonnegative and $f_1,\ldots,f_{n-1}$ are constant \cite{TangYan2023}. For minimal hypersurfaces of dimension four, these conditions on the power sums amount to constancy of $S$ and $f_3$. In higher dimensions, however, prescribing $f_3$ still leaves the higher power sums undetermined.

In dimension four, Deng, Gu and Wei proved isoparametricity in the minimal Willmore case \cite{DengGuWei}. For constant $f_3$, Li obtained rigidity under additional bounds on $f_3$ and the Gauss--Kronecker curvature \cite{Li2022}. Spruck and Xiao proved that a closed minimal hypersurface with constant $S$ and constant $f_3$ is isoparametric under the additional assumption of nonnegative scalar curvature, and hence $S\in \{0,4,12\}$ \cite{SpruckXiao2025}. He, Xu and Zhao subsequently removed the nonnegative scalar curvature assumption and obtained the same classification under constant $S$ and constant $f_3$ alone \cite{HeXuZhao}. For each fixed dimension, Tan, Tang, Xie and Yan proved that the set of constant values of $S$ is locally finite for closed embedded minimal hypersurfaces with constant $f_3$ \cite{TanTangXieYan}. This does not determine the location of the second gap. For immersed hypersurfaces, we prove the following theorem.

\begin{theorem}\label{thm:main}
Let $M^n$, $n\ge3$, be a closed oriented hypersurface minimally immersed in $\Sph^{n+1}(1)$, with second fundamental form $h$ relative to a global unit normal. Suppose that $S=\norm h^2$ and $f_3=\tr(h^3)$ are constant. If $S>n$, then
\[
S\ge2n.
\]
Equality holds if and only if each connected component of $M$ is locally congruent to a Cartan minimal isoparametric hypersurface with three distinct principal curvatures. In particular, equality is possible only for $n\in\{3,6,12,24\}$.
\end{theorem}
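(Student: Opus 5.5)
We follow the Peng--Terng scheme, using constancy of $f_3$ to supply the identities that are otherwise only available as inequalities. Work in a local orthonormal frame diagonalizing $h$, with principal curvatures $\lambda_1,\dots,\lambda_n$. Write $h_{ijk}$ and $h_{ijkl}$ for the first and second covariant derivatives; by Codazzi, $h_{ijk}$ is totally symmetric. Simons' identity with $S$ constant gives
\[
\norm{\nabla h}^2 = S(S-n),
\]
so $S>n$ forces $\nabla h\neq0$.

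Differentiating $f_3=\text{const}$ once gives $\sum_i\lambda_i^2h_{iik}=0$ for every $k$, and $\sum_i\lambda_i h_{iik}=0$ comes from $S$ constant. Differentiating $f_3$ twice and combining with the Ricci identities yields an explicit formula expressing
\[
\sum_{i,j,k}\lambda_i h_{ijk}^2 \quad\text{and}\quad \sum_{i}\lambda_i^2\,\Delta h_{ii}
\]
in terms of $S$, $f_3$, $f_4$ and the mixed quantities $\sum\lambda_i\lambda_j(\lambda_i-\lambda_j)^2$. The first step is to write down the integral identities
\[
\int_M \norm{\nabla^2 h}^2 = \int_M \Bigl( \tfrac32 \sum \!\cdots \Bigr)
\]
(the Peng--Terng formula for $\tfrac12\Delta\norm{\nabla h}^2$), where $3A-2B$ appears with
\[
A=\sum\lambda_i^2h_{ijk}^2,\qquad B=\sum\lambda_i\lambda_jh_{ijk}^2.
\]
The second step uses the $f_3$ identities to rewrite $A$ pointwise: $3A-2B$ becomes a combination of $S\norm{\nabla h}^2$ and $\sum\lambda_i h_{ijk}^2$-type terms. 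Integrating, we aim for
\[
\int_M \norm{\nabla^2 h}^2 = \int_M S(S-n)\bigl(c_1S - c_2n + \text{(nonnegative)}\bigr).
\]

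The third step, which is the main obstacle, is a sharp lower bound for $\norm{\nabla^2h}^2$ in terms of $(S-n)$. We decompose $h_{ijkl}$ into its symmetric part and the curvature commutator $\tfrac12(h_{ijkl}-h_{ijlk})$, which by the Ricci identity equals $\tfrac12(\lambda_i-\lambda_j)R_{ijkl}$-type terms. This gives
\[
\norm{\nabla^2h}^2 \ge \tfrac34\sum (\lambda_i-\lambda_j)^2(1+\lambda_i\lambda_j)^2 + \text{symmetric part},
\]
and the symmetric part is bounded below using the vanishing traces $\sum_i h_{iikl}=0$ and $\sum_i\lambda_i h_{iikl}=-\sum h_{ijk}h_{ijl}$. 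We also use the second derivative of $f_3$, which gives $\sum\lambda_i^2h_{iikl}$ as a quadratic in $\nabla h$. These trace constraints are what should sharpen the classical $\tfrac{10}{7}$-type bounds to the exact constant. Combining this with the second step, we expect an inequality of the form
\[
0\ge \int_M (S-n)(2n-S)\,\Phi
\]
with $\Phi>0$. Hence $n<S<2n$ is impossible. The first thing to try is to choose the Cauchy--Schwarz weights so that the extremal case $S=2n$ is matched by the algebraic identities of the Cartan hypersurface, where $h_{ijk}$ is nonzero only for distinct eigenvalue indices.

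For equality, all inequalities in the third step must be equalities. We expect this to force $\lambda$ to take three distinct constant values with $h_{ijk}=0$ unless $i,j,k$ lie in distinct eigenspaces. The principal curvatures are then constant, so $M$ is isoparametric with $g=3$, and by Cartan's classification each component is locally congruent to a Cartan hypersurface, for which $n\in\{3,6,12,24\}$. Conversely, these Cartan hypersurfaces have $S=2n$ and constant $f_3$.
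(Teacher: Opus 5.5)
Your proposal has a genuine gap. The third step carries the entire content of the theorem, and it is only announced (``we expect an inequality of the form $0\ge\int_M(S-n)(2n-S)\Phi$''). The mechanism you suggest is to bound the symmetric part of $\nabla^2h$ from below using its trace $\tr U=-\frac{S-n}2h$ and the contractions $\sum_i\lambda_ih_{iikl}$, $\sum_i\lambda_i^2h_{iikl}$. That is essentially the Peng--Terng-type estimate which, with constant $f_3$, gave only $S>5n/3$ (Yang--Cheng) and $S>1.8252n-0.712898$ (Cheng--Wei--Yamashiro). Nothing in your outline explains why it would now reach $2n$.

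Step two also contains an error that hides the real difficulty. The Peng--Terng formula involves $3(\mathfrak A-2\mathfrak B)$, not $3A-2B$. The constant-$f_3$ identity is $\mathfrak A-2\mathfrak B=Sf_4-f_3^2-S^2$, which is not a combination of $S\norm{\nabla h}^2$ and $\mathfrak C$. After division by $S^3$ it equals $p+t/n$, where $p=\norm\Phi^2/S^2$ is genuinely nonconstant. Thus $\norm U^2/S^3=\frac32p+t(2t-1)+\frac{3t^2}{2n}$, and any argument must control both the average $x=\fint_Mp$ and the fluctuation of $p$. The paper does this with the $\Delta f_4$ identity in the form $\frac1S\Delta p=-t(\frac1n+\kappa(r)z)+dp+5\omega_r$, used with $r=3/2$, $5/4$ and $0$, together with the pointwise moment bound $2p+z\le1-\frac2n$.

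The lower bound you are missing is, in the paper, a Cauchy--Schwarz comparison of the trace-free part $U_0$ with the explicit quartic tensor $V_0=(h\odot\Phi-\frac a4h\odot h)_0$.
\begin{itemize}
\item The mixed term $\fint_M\ip{U_0}{V_0}$ is computed exactly by integration by parts, using $\diver\Phi=0$ and the case $r=0$ above.
\item The norm $\fint_M\norm{V_0}^2$ contains the cubic quantity $T=\fint_M\norm\Theta^2/S^3$, with $\Theta=(h-\frac a4I)\Phi-Sxh$. Bounding $T\le KA_{5/4}$ is a separate argument.
\item In that argument, constancy of $f_3$ makes $\Theta$ divergence-free, so a Weitzenb\"ock identity for $d^\nabla\Theta$ applies. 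The factor $(\lambda_i-\lambda_k)^2$ in $d^\nabla\Theta$ is then estimated pointwise in terms of $p$.
\item The resulting term $\fint_M(p-x)^2$ is cancelled by the covariance identity $5\fint_M(p-x)\omega_r=-d\fint_M(p-x)^2-\frac1S\fint_M\norm{\nabla p}^2$.
\item Only after this does a convexity check in the variables $(y,z)$ exclude $n<S<2n$.
\end{itemize}

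Your equality discussion is likewise a guess rather than a consequence of anything you proved. In the paper, equality forces $z=0$, $x=\frac1{2n}$ and $A_{5/4}=0$, hence $T=0$, i.e.\ $\Theta\equiv0$. This reads $h^3=3h$ and pins the principal curvatures to $-\sqrt3,0,\sqrt3$, each with multiplicity $n/3$. Isoparametricity therefore follows directly, before Cartan's classification is invoked.
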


Thus constancy of $f_3$ suffices for the second gap, with no further power sums prescribed. The new conclusion concerns dimensions $n\ge5$; in dimensions three and four, stronger classification results are already known. At $S=2n$, the proof also gives $f_3=0$ and $h^3=3h$, from which the rigidity statement follows by Cartan's classification \cite{Cartan1939,Cecil2008}.

To reach the value $2n$, we retain the variation of $f_4$ in the integral estimates. Consider
\[
\Phi=h^2-\frac{f_3}{S}h-\frac Sn I,
\]
where $I$ is the identity endomorphism. This is the component of $h^2$ orthogonal to $I$ and $h$; the same tensor
is already used in Cheng, Wei and Yamashiro \cite{CWY}, its squared norm
occurs in the moment estimates of \cite{YangCheng,CWY}, and the tensor is
also used in \cite{Chen2026}. Constancy of $f_3$ allows us to choose a divergence-free cubic polynomial in $h$ whose integrated Weitzenb\"ock estimate contains a variance term. The scalar identity for $\norm\Phi^2$ cancels this term and controls the trace contributions in the fourth-order estimate. Taking trace-free parts then permits a comparison with the endpoint $S=2n$, which excludes the whole interval $n<S<2n$.

\section{The quadratic defect and the cubic estimate}\label{sec:defect}

Throughout, $M$ satisfies the hypotheses of Theorem~\ref{thm:main}. Here closed means compact without boundary. All arguments apply componentwise. Our curvature convention is
\[
R_{ijkl}=\delta_{ik}\delta_{jl}-\delta_{il}\delta_{jk}
+h_{ik}h_{jl}-h_{il}h_{jk}.
\]
We take $\Delta=\tr\nabla^2$ and use the metric inner products on tensors, with full sums over ordered indices. Indices range from $1$ to $n$, and repeated indices are summed unless a summation sign is displayed. In a local orthonormal frame $\{e_i\}$, commas indicate covariant differentiation:
\[
h_{ij,k}=(\nabla h)(e_i,e_j,e_k),
\qquad
h_{ij,kl}=(\nabla^2h)(e_i,e_j,e_k,e_l),
\]
with the last index indicating the last derivative. We also write $h_{ijk}=h_{ij,k}$ and $h_{ijkl}=h_{ij,kl}$, and use the same differentiation convention for other tensors. Codazzi implies that $h_{ijk}$ is completely symmetric. Integrals without a specified measure are taken with respect to the induced volume form $d\mu$. We write
\[
\fint_M u:=\frac1{\Vol(M)}\int_Mu\,d\mu
\]
for the normalized integral. Reversing the chosen normal changes the signs of $h$ and $f_3$ but leaves all scalar estimates below unchanged.

We differentiate covariantly before choosing a frame that diagonalizes $h$. At a point where $h_{ij}=\lambda_i\delta_{ij}$, set
\[
\mathfrak A=\sum_{i,j,k}\lambda_i^2h_{ijk}^2,
\qquad
\mathfrak B=\sum_{i,j,k}\lambda_i\lambda_jh_{ijk}^2,
\qquad
\mathfrak C=\sum_{i,j,k}\lambda_i h_{ijk}^2.
\]
For constant $S$ and $f_3$, the Simons and Peng--Terng identities take the form \cite{Simons,PTseminar,PTannalen,YangCheng}:
\begin{align}
\norm{\nabla h}^2&=S(S-n),\label{eq:grad-h}\\
\mathfrak C&=\frac{S-n}{2}f_3,\label{eq:C-id}\\
\mathfrak A-2\mathfrak B&=Sf_4-f_3^2-S^2,\label{eq:A-2B}\\
\frac14\Delta f_4&=(n-S)f_4+2\mathfrak A+\mathfrak B.
\label{eq:lap-f4}
\end{align}
The first two follow from $\Delta h=(n-S)h$ and
$\Delta f_3/3=(n-S)f_3+2\mathfrak C$. Both constancy assumptions enter
\eqref{eq:A-2B}. Ricci commutation gives
\begin{equation}\label{eq:Ricci-commutation}
h_{ij,kl}-h_{ij,lk}
=h_{ip}R_{pjkl}+h_{pj}R_{pikl}.
\end{equation}
Differentiating $S$ and $f_3$ twice and then choosing a principal frame gives, for each $k$,
\[
\sum_i\lambda_i h_{ii,kk}=-\sum_{i,j}h_{ijk}^2,
\qquad
\sum_i\lambda_i^2h_{ii,kk}=-2\sum_{i,j}\lambda_i h_{ijk}^2.
\]
Consequently,
\[
\sum_{i,k}\lambda_i\lambda_k^2h_{ii,kk}=-\mathfrak A,
\qquad
\sum_{i,k}\lambda_i^2\lambda_kh_{ii,kk}=-2\mathfrak B.
\]
Codazzi and \eqref{eq:Ricci-commutation} imply
$h_{ii,kk}-h_{kk,ii}=(\lambda_i-\lambda_k)(1+\lambda_i\lambda_k)$.
Interchanging $i,k$ in the first contraction and subtracting, we obtain
\[
\mathfrak A-2\mathfrak B
=\sum_{i,k}\lambda_i^2\lambda_k(\lambda_i-\lambda_k)(1+\lambda_i\lambda_k)
=Sf_4-f_3^2-S^2,
\]
which proves \eqref{eq:A-2B}.

From now on assume $S>n$, and write
\begin{equation}\label{eq:basic-variables}
a=\frac{f_3}{S},\qquad
\Phi=h^2-ah-\frac Sn I,
\qquad
p=\frac{\norm\Phi^2}{S^2},
\qquad
z=\frac{a^2}{S}=\frac{f_3^2}{S^3},
\end{equation}
\[
t=\frac{S-n}{S},\qquad d=3-4t,
\qquad x=\fint_Mp.
\]
Here $a,z,t,d$ are constant and $p$ is nonnegative, with average $x$. Orthogonality of $\Phi$ to $I$ and $h$ gives
\begin{equation}\label{eq:p-formula}
p=\frac{f_4}{S^2}-z-\frac1n.
\end{equation}
For $r\in\mathbb R$ define
\begin{equation}\label{eq:omega-r}
\omega_r=
\frac1{3S^3}\sum_{i,j,k}
\left(\lambda_i+\lambda_j+\lambda_k-ra\right)^2h_{ijk}^2,
\qquad
\kappa(r)=4-5r+\frac53r^2.
\end{equation}

\begin{lemma}\label{lem:scalar}
For every $r\in\mathbb R$,
\begin{equation}\label{eq:scalar-defect}
\frac1S\Delta p
=-t\left(\frac1n+\kappa(r)z\right)+dp+5\omega_r.
\end{equation}
Consequently,
\begin{align}
5\fint_M\omega_r
&=t\left(\frac1n+\kappa(r)z\right)-dx,\label{eq:omega-average}\\
5\fint_M(p-x)\omega_r
&=-d\fint_M(p-x)^2-\frac1S\fint_M\norm{\nabla p}^2.
\label{eq:omega-cov}
\end{align}
Moreover, the following bound holds pointwise on $M$:
\begin{equation}\label{eq:moment-region}
2p+z\le1-\frac2n.
\end{equation}
\end{lemma}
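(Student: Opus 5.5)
The approach is to differentiate the identity \eqref{eq:p-formula} twice and rewrite the fourth-order terms of \eqref{eq:lap-f4} in terms of $\omega_r$, using the Peng--Terng identities \eqref{eq:grad-h}--\eqref{eq:A-2B}. Since $S$ and $z$ are constant, \eqref{eq:p-formula} gives $\frac1S\Delta p=\Delta f_4/S^3$, and \eqref{eq:lap-f4} then gives
\[
\frac1S\Delta p=\frac{4(n-S)f_4+4(2\mathfrak A+\mathfrak B)}{S^3}.
\]

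\emph{Expanding $\omega_r$.} Next I expand the square in \eqref{eq:omega-r} using the complete symmetry of $h_{ijk}$. The square terms give $3\mathfrak A$, the cross terms give $6\mathfrak B$, the linear term gives $-6ra\,\mathfrak C$, and the constant term gives $r^2a^2\norm{\nabla h}^2$. Using \eqref{eq:grad-h} and \eqref{eq:C-id}, and recalling $a=f_3/S$, $z=a^2/S$, $t=(S-n)/S$, this yields
\[
\frac{\mathfrak A+2\mathfrak B}{S^3}=\omega_r+rzt-\tfrac13r^2zt.
\]

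\emph{Splitting $2\mathfrak A+\mathfrak B$.} The key algebraic observation is the decomposition $4(2\mathfrak A+\mathfrak B)=5(\mathfrak A+2\mathfrak B)+3(\mathfrak A-2\mathfrak B)$. The second piece is handled by \eqref{eq:A-2B}, and $f_4/S^2$ is replaced everywhere by $p+z+1/n$ via \eqref{eq:p-formula}.

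\emph{Collecting terms.} The coefficient of $p$ is $3-4t=d$. The coefficient of $z$ is $-4t+3-3+5rt-\tfrac53r^2t=-t\kappa(r)$. The constant term is $(3-4t)/n-3/S=-t/n$, using $3/n-3/S=3t/n$. This gives \eqref{eq:scalar-defect}.

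\emph{The integral identities.} Integrating \eqref{eq:scalar-defect} over the closed manifold $M$ kills the Laplacian and gives \eqref{eq:omega-average}. For \eqref{eq:omega-cov}, I multiply \eqref{eq:scalar-defect} by $p-x$ and integrate. The left side becomes $-\frac1S\fint\norm{\nabla p}^2$ by integration by parts. On the right, the constant term integrates to zero against $p-x$, and $\fint(p-x)p=\fint(p-x)^2$. Rearranging gives \eqref{eq:omega-cov}.

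\emph{The pointwise bound.} The bound \eqref{eq:moment-region} is purely algebraic. Writing $f_4=S^2(p+z+1/n)$, it is equivalent to
\[
2Sf_4-f_3^2\le S^3
\qquad\text{for real }\lambda_1,\dots,\lambda_n\text{ with }\sum_i\lambda_i=0,\ \sum_i\lambda_i^2=S.
\]
This is the step I expect to be the main obstacle, since no direct sum-of-squares identity is apparent.

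First I would try to write $S^3-2Sf_4+f_3^2$ as a sum of squares, for example $\sum_{i,j}$ of suitable products built from $\lambda_i\lambda_j$, $\lambda_i^2$ and $S$. Failing that, I would maximize the homogeneous function $2Sf_4-f_3^2$ on the compact set $\{\sum\lambda_i=0,\ \sum\lambda_i^2=S\}$ with Lagrange multipliers. The critical-point equation has the form $8S\lambda_i^3-6f_3\lambda_i^2=\alpha\lambda_i+\beta$, a cubic, so at a maximizer the $\lambda_i$ take at most three distinct values. I would then check the inequality on those finitely many multiplicity patterns, expecting the worst case to be two values of multiplicities $1$ and $n-1$ or a symmetric two-value configuration. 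The extra factor $1-2/n$ rather than $1$ should come out of the trace-zero constraint in that reduced problem.
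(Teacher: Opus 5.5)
Your derivation of \eqref{eq:scalar-defect} and of the two integrated identities is correct and coincides with the paper's. It uses the same expansion $S^3\omega_r=\mathfrak A+2\mathfrak B-2ra\mathfrak C+\frac{r^2a^2}{3}\norm{\nabla h}^2$ and the same splitting $4(2\mathfrak A+\mathfrak B)=5(\mathfrak A+2\mathfrak B)+3(\mathfrak A-2\mathfrak B)$. It then integrates against $1$ and against $p-x$, as the paper does.

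The gap is the pointwise bound \eqref{eq:moment-region}. Your reduction to $S^3-2Sf_4+f_3^2\ge0$ is right, but you stop there. The sum-of-squares certificate you could not find comes straight from expanding $S^3=(\sum_i\lambda_i^2)^3$:
\begin{itemize}
\item the $\lambda_i^6$ terms cancel, with coefficients $1-2+1$;
\item the terms $\lambda_i^4\lambda_j^2$ with $i\ne j$ survive with coefficient $3-2=1$;
\item the terms $\lambda_i^3\lambda_j^3$ with $i\ne j$ come from $f_3^2$.
\end{itemize}
Grouping these gives
\[
S^3-2Sf_4+f_3^2=\sum_{i<j}\lambda_i^2\lambda_j^2(\lambda_i+\lambda_j)^2+6\sum_{i<j<k}\lambda_i^2\lambda_j^2\lambda_k^2\ge0 .
\]
This is the paper's identity written with $\mu_i=\lambda_i/\sqrt S$. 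It holds for all real $\lambda_i$, so the condition $\sum_i\lambda_i=0$ is not needed.

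Your Lagrange-multiplier fallback would not have closed the gap as planned. The reduced inequality is sharp with constant exactly $S^3$. Equality is attained at $\lambda=(\sqrt{S/2},-\sqrt{S/2},0,\dots,0)$, which has three distinct values when $n\ge3$. Two-valued configurations with multiplicities $k+l=n\ge3$ give $2Sf_4-f_3^2=\frac{k^2+l^2}{kl(k+l)}S^3<S^3$. So the worst case you anticipated is not the extremal one. Your case check would have to treat the full three-valued family with arbitrary multiplicities, and you did not do that.

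Also, the $-2/n$ in \eqref{eq:moment-region} does not come from the trace-zero constraint in the reduced problem. It is just twice the $-1/n$ in \eqref{eq:p-formula}, and the reduced bound $2Sf_4-f_3^2\le S^3$ carries no such factor.
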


\begin{proof}
The complete symmetry of $h_{ijk}$ gives
\[
S^3\omega_r
=\mathfrak A+2\mathfrak B-2ra\mathfrak C
+\frac{r^2a^2}{3}\norm{\nabla h}^2.
\]
Using \eqref{eq:grad-h}--\eqref{eq:A-2B}, we obtain
\[
\frac{\mathfrak A+2\mathfrak B}{S^3}
=\omega_r+\left(r-\frac{r^2}{3}\right)tz,
\qquad
\frac{\mathfrak A-2\mathfrak B}{S^3}=p+\frac tn.
\]
Since $4(2\mathfrak A+\mathfrak B)=5(\mathfrak A+2\mathfrak B)+3(\mathfrak A-2\mathfrak B)$,
\eqref{eq:lap-f4} and \eqref{eq:p-formula} give
\begin{align*}
\frac1S\Delta p
&=-4t\left(p+z+\frac1n\right)
 +5\omega_r+\left(5r-\frac53r^2\right)tz+3p+\frac{3t}{n}\\
&=dp-t\left[\frac1n+\left(4-5r+\frac53r^2\right)z\right]+5\omega_r.
\end{align*}
Integrating proves \eqref{eq:omega-average}. Multiplication by $p-x$ and integration by parts yield \eqref{eq:omega-cov}, since $\fint_M(p-x)=0$.

For the pointwise estimate, put $\mu_i=\lambda_i/\sqrt S$ and
$m_j=\sum_i\mu_i^j$. Thus $m_1=0$, $m_2=1$, $z=m_3^2$, and
$p=m_4-m_3^2-1/n$. The identity
\[
1-2m_4+m_3^2
=\sum_{i<j}\mu_i^2\mu_j^2(\mu_i+\mu_j)^2
+6\sum_{i<j<k}\mu_i^2\mu_j^2\mu_k^2\ge0
\]
follows by expanding the two sums as $m_4+m_3^2-2m_6$ and $1-3m_4+2m_6$, respectively; see also \cite[Lemma~2.3]{Chen2026}. It is equivalent to \eqref{eq:moment-region}.
\end{proof}
In the range $n<S\le2n$, one has $0<t\le1/2$ and $d\ge1$. Taking $r=3/2$ in \eqref{eq:omega-average} and using $\kappa(3/2)=1/4$ and $\omega_{3/2}\ge0$ therefore gives
\begin{equation}\label{eq:scalar-admissible}
x\le\frac td\left(\frac1n+\frac z4\right).
\end{equation}
The cubic estimate uses $r=5/4$. Since $\kappa(5/4)=17/48$, we set
\begin{equation}\label{eq:A54}
A_{5/4}:=
 t\left(\frac1n+\frac{17z}{48}\right)-dx
=5\fint_M\omega_{5/4}\ge0.
\end{equation}
Consider the cubic tensor
\[
\Theta=\left(h-\frac a4I\right)\Phi-Sxh,
\qquad T=\fint_M\frac{\norm\Theta^2}{S^3}.
\]
Subtracting $Sxh$ ensures that $\fint_M\ip h\Theta=0$. To estimate $T$, we use the covariant exterior derivative, normalized by
\begin{equation}\label{eq:dnabla}
(d^\nabla\Theta)_{kij}=\Theta_{ij,k}-\Theta_{kj,i}.
\end{equation}

\begin{proposition}\label{prop:cubic}
For $n<S\le2n$ one has
\begin{equation}\label{eq:T-bound}
T\le K A_{5/4},
\end{equation}
where
\begin{equation}\label{eq:Kdef}
K=
\frac1{1-t}
\left[
\frac xd+\frac4{15n}
+\frac{(2-\frac4n)d}{225}
+\frac z{15+d}
\right].
\end{equation}
\end{proposition}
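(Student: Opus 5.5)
We would prove \eqref{eq:T-bound} by an integrated Weitzenb\"ock (Bochner) inequality for the symmetric tensor $\Theta$, in which $d^\nabla\Theta$ controls $\Theta$ from below and is itself controlled by $\omega_{5/4}$.

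\emph{Step 1 (divergence-free).} Since $S$ and $f_3$ are constant, the divergence of $\Theta$ has a tractable form. Write $\Theta=h^3-\frac{5a}{4}h^2+(\frac{a^2}{4}-\frac Sn-Sx)h+\frac{aS}{4n}I$. For any symmetric polynomial $P(h)$, Codazzi gives $\diver P(h)_j=\sum_i P(h)_{ij,i}$, and in a principal frame this becomes a combination of $h_{iij}$ with coefficients given by divided differences of $P$. The traces $\tr h=0$, $\tr h^2=S$ and $\tr h^3=f_3$ are constant. We will verify that the specific weight $-\frac a4$ (the origin of $r=5/4$) makes $\diver\Theta$ expressible through $\nabla f_4=\nabla(S^2p)$. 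If it is not exactly zero, we will keep it as an extra term, bounded by $\frac1S\fint\norm{\nabla p}^2$ using \eqref{eq:omega-cov}; this is presumably the source of the term $x/d$ in $K$.

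\emph{Step 2 (Weitzenb\"ock with curvature bound).} Apply the standard identity for a symmetric 2-tensor $\Theta$ on $M$:
\[
\fint\norm{d^\nabla\Theta}^2/2+\fint\norm{\diver\Theta}^2-\fint\norm{\nabla\tr\Theta}^2\ \text{(up to normalization)}=\fint\big(\ip{\Ric\circ\Theta}{\Theta}-R_{ikjl}\Theta_{ij}\Theta_{kl}\big).
\]
With the curvature tensor given by Gauss, the right-hand side becomes $n\norm\Theta^2-\ip{h^2}{\Theta^2}+\ldots$ in a frame diagonalizing $\Theta$, which is the same frame as for $h$. The leading term $n\norm\Theta^2=(1-t)S\norm\Theta^2$ yields the factor $\frac1{1-t}$. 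The remaining curvature terms, of the form $\sum(\theta_i-\theta_j)^2\lambda_i\lambda_j$, must be bounded. We plan to use $\fint\ip h\Theta=0$ together with the pointwise inequality \eqref{eq:moment-region}. The term $(2-\frac4n)$ suggests that we absorb them using $2p+z\le1-\frac2n$.

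\emph{Step 3 (bounding $d^\nabla\Theta$ by $\omega_{5/4}$).} Compute $(d^\nabla\Theta)_{kij}$ in a principal frame. For $\Theta=P(h)$, we get $\Theta_{ij,k}-\Theta_{kj,i}$ with $\Theta_{ij,k}=P[\lambda_i,\lambda_j]h_{ijk}$, the first divided difference. For the cubic part, $P[\lambda_i,\lambda_j]=\lambda_i^2+\lambda_i\lambda_j+\lambda_j^2-\frac{5a}4(\lambda_i+\lambda_j)+\text{const}$. The difference of two such terms factors as $(\lambda_j-\lambda_k)(\lambda_i+\lambda_j+\lambda_k-\frac{5a}{4})h_{ijk}$ (the constant cancels). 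Hence
\[
\norm{d^\nabla\Theta}^2=\sum(\lambda_j-\lambda_k)^2(\lambda_i+\lambda_j+\lambda_k-\tfrac54a)^2h_{ijk}^2 .
\]
We then bound $(\lambda_j-\lambda_k)^2$ by an expression of size $S$ times constants. The crude choice $(\lambda_j-\lambda_k)^2\le2S$ loses too much, so we will split $(\lambda_j-\lambda_k)^2$ into symmetric combinations in $i,j,k$ and use $\sum_\sigma$ symmetrization. This produces $\omega_{5/4}$ with coefficient $\frac4{15n}+\frac{(2-4/n)d}{225}+\frac{z}{15+d}$, after an optimized Cauchy--Schwarz with weights chosen by $d$. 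Combining Steps 1--3 and $5\fint\omega_{5/4}=A_{5/4}$ then gives \eqref{eq:T-bound}.

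The main obstacle is Step 3 together with the curvature remainder of Step 2: obtaining exactly the stated constants requires a careful pointwise optimization. I would first attempt the eigenvalue-level estimate using only $\sum\mu_i^2=1$ and \eqref{eq:moment-region}, and introduce a free parameter in the Cauchy--Schwarz step to be tuned at the end.
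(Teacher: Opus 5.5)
Your skeleton matches the paper: a Weitzenb\"ock estimate for $\Theta$, the factorization $(d^\nabla\Theta)_{kij}=(\lambda_i-\lambda_k)(\lambda_i+\lambda_j+\lambda_k-\frac54a)h_{ijk}$, and a pairwise bound on $(\lambda_i-\lambda_k)^2/S$ feeding into $\omega_{5/4}$. However, the two steps that actually produce $K$ are wrong or missing.

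\textbf{The Weitzenb\"ock step.} The identity you quote omits $\fint\norm{\nabla\Theta}^2$. $\Theta$ is trace-free and exactly divergence-free. This holds for every cubic polynomial in $h$ with constant coefficients, since $\diver(h^2)=\frac12\nabla S$ and $\diver(h^3)=\frac12h(\nabla S)+\frac13\nabla f_3$. The weight $-a/4$ plays no role there, so Step~1 gives no extra term and is not the source of $x/d$. Integration by parts and the Ricci identity then give
\[
\frac12\fint_M\norm{d^\nabla\Theta}^2=\fint_M\bigl(\norm{\nabla\Theta}^2+n\norm\Theta^2-\ip{h}{\Theta}^2\bigr).
\]
Here the $\lambda_i^2\theta_i^2$ terms cancel exactly, and your remaining sum $\sum(\theta_i-\theta_j)^2\lambda_i\lambda_j$ is just $-2\ip{h}{\Theta}^2$. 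To reach the stated constant you need the orthogonal splitting $\norm{\nabla\Theta}^2=\norm{(\nabla\Theta)^{\mathrm{sym}}}^2+\frac13\norm{d^\nabla\Theta}^2$. It lowers the coefficient of $\fint\norm{d^\nabla\Theta}^2/S^4$ from $\frac12$ to $\frac16$. With your version, the same pairwise estimate yields $3K$ in place of $K$.

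\textbf{The variance term.} This is the decisive gap. By \eqref{eq:hTheta}, the remainder $-\fint\ip{h}{\Theta}^2/S^4=-\fint(p-x)^2$ is the variance of $p$. It cannot be absorbed using $\fint\ip h\Theta=0$ and \eqref{eq:moment-region}. The target $T\le KA_{5/4}$ is linear in $A_{5/4}$, so every error term must vanish when $\omega_{5/4}\equiv0$. Pointwise information only gives $\fint(p-x)^2\le x\bigl(\frac12(1-\frac2n-z)-x\bigr)$, which need not vanish.

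The missing idea is the differential identity \eqref{eq:omega-cov}. To use it, the pairwise estimate must keep the pointwise value of $p$, rather than being built from $\sum\mu_i^2=1$ and \eqref{eq:moment-region} alone. The paper proceeds as follows.
\begin{itemize}
\item It writes $(\xi_i-\xi_k)^2=\frac4n+2(\rho_i+\rho_k)+2\alpha u-u^2$, with $\rho_i=\xi_i^2-\alpha\xi_i-\frac1n$ and $u=\xi_i+\xi_k$.
\item It bounds $|\rho_i+\rho_k|^2\le p(2-\frac4n-u^2)$ by projecting $e_i+e_k$ onto $\{\mathbf 1,\xi\}^\perp$. This projection, not \eqref{eq:moment-region}, is where $2-\frac4n$ comes from.
\item It applies Young's inequality with weights $\frac5d$ and $\frac d{45}$, then completes the square in $u$.
\item It symmetrizes using the fact that the sum of the three squared pairwise differences is at most twice the largest.
\end{itemize}
This produces the term $\frac5d\fint p\,\omega_{5/4}$. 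Then \eqref{eq:omega-cov} together with \eqref{eq:omega-average} gives
\[
\frac5d\fint_Mp\,\omega_{5/4}=\frac xdA_{5/4}-\fint_M(p-x)^2-\frac1{dS}\fint_M\norm{\nabla p}^2 .
\]
This cancels the variance exactly, and it is the true origin of the term $x/d$ in $K$.
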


\begin{proof}
The tensor $\Theta$ is trace-free, as $\Phi\perp I,h$. Its expansion is
\begin{equation}\label{eq:Theta-expanded}
\Theta=h^3-\frac{5a}{4}h^2
+\left(\frac{a^2}{4}-\frac Sn-Sx\right)h
+\frac{aS}{4n}I.
\end{equation}
Thus $\Theta$ commutes with $h$. Minimality and Codazzi give $\diver h=0$ and
\[
\diver(h^2)=\frac12\nabla S,
\qquad
\diver(h^3)=\frac12h(\nabla S)+\frac13\nabla f_3.
\]
Since $S$, $f_3$, and the coefficients in \eqref{eq:Theta-expanded} are constant, both $\Phi$ and $\Theta$ are divergence-free.
The Ricci identity \eqref{eq:Ricci-commutation} for $\Theta$ reads
\[
\Theta_{ij,kl}-\Theta_{ij,lk}
=\Theta_{ip}R_{pjkl}+\Theta_{pj}R_{pikl}.
\]
From \eqref{eq:dnabla} and the symmetry of $\Theta$,
\[
\frac12\norm{d^\nabla\Theta}^2
=\norm{\nabla\Theta}^2
-\sum_{i,j,k}\Theta_{ij,k}\Theta_{kj,i}.
\]
Integrating the mixed term by parts, commuting the last two derivatives, and using $\diver\Theta=0$ yields
\[
\int_M \Theta_{ij,k}\Theta_{ik,j}
=-\int_M\Ric_{ik}\Theta_{ij}\Theta_{kj}
+\int_M R_{ikjl}\Theta_{ij}\Theta_{kl}.
\]
As $h$ and $\Theta$ commute, we may diagonalize them in the same frame, writing
$h_{ij}=\lambda_i\delta_{ij}$ and $\Theta_{ij}=\theta_i\delta_{ij}$. The Gauss equation and $\sum_i\theta_i=0$ give
\begin{align*}
&\Ric_{ik}\Theta_{ij}\Theta_{kj}-R_{ikjl}\Theta_{ij}\Theta_{kl}\\
&\qquad=\sum_i(n-1-\lambda_i^2)\theta_i^2
-\sum_{i\ne j}(1+\lambda_i\lambda_j)\theta_i\theta_j\\
&\qquad=n\norm\Theta^2-\ip{h}{\Theta}^2.
\end{align*}
Hence
\begin{equation}\label{eq:Weitzenbock}
\frac12\fint_M\norm{d^\nabla\Theta}^2
=\fint_M\left(\norm{\nabla\Theta}^2+n\norm\Theta^2-\ip{h}{\Theta}^2\right).
\end{equation}
The completely symmetric part of $\nabla\Theta$ is
\[
(\nabla\Theta)^{\mathrm{sym}}_{ijk}
=\frac13(\Theta_{ij,k}+\Theta_{ik,j}+\Theta_{jk,i}).
\]
The orthogonal decomposition gives
\[
\norm{\nabla\Theta}^2
=\norm{(\nabla\Theta)^{\mathrm{sym}}}^2
+\frac13\norm{d^\nabla\Theta}^2.
\]
Also, since $h^2=\Phi+ah+(S/n)I$,
\begin{equation}\label{eq:hTheta}
\ip h\Theta=\ip{h^2}{\Phi}-S^2x=S^2(p-x).
\end{equation}
Substituting in \eqref{eq:Weitzenbock}, dividing by $S^4$, and discarding the nonnegative symmetric term, we obtain
\begin{equation}\label{eq:T-Weitz-final}
(1-t)T\le
\fint_M\frac{\norm{d^\nabla\Theta}^2}{6S^4}
+\fint_M(p-x)^2.
\end{equation}
Differentiating \eqref{eq:Theta-expanded} and then choosing a principal frame gives
\[
\Theta_{ij,k}
=\left[\lambda_i^2+\lambda_i\lambda_j+\lambda_j^2
-\frac{5a}{4}(\lambda_i+\lambda_j)
+\frac{a^2}{4}-\frac Sn-Sx\right]h_{ijk}.
\]
Subtracting the expression with $i$ and $k$ interchanged, and using Codazzi,
we obtain
\[
(d^\nabla\Theta)_{kij}
=(\lambda_i-\lambda_k)
\left(\lambda_i+\lambda_j+\lambda_k-\frac54a\right)h_{ijk}.
\]
For the estimate of $\lambda_i-\lambda_k$ in terms of $p$, put
\[
\xi_i=\frac{\lambda_i}{\sqrt S},
\qquad
\alpha=\frac a{\sqrt S},
\qquad
\rho_i=\xi_i^2-\alpha\xi_i-\frac1n.
\]
Then $\sum_i\xi_i=0$, $\sum_i\xi_i^2=1$, and $\sum_i\xi_i^3=\alpha$.
It follows that $\rho=(\rho_i)$ satisfies
\[
\sum_i\rho_i=\sum_i\xi_i\rho_i=0,
\qquad \sum_i\rho_i^2=p.
\]
Let $e_i$ be the standard basis of $\mathbb R^n$ and $\mathbf1=(1,\ldots,1)$. Projecting $e_i+e_k$ onto $\{\mathbf1,\xi\}^{\perp}$ and applying Cauchy--Schwarz gives, for $i\ne k$,
\begin{align*}
|\rho_i+\rho_k|^2
&\le p\left(\norm{e_i+e_k}^2
-\frac{\ip{e_i+e_k}{\mathbf1}^2}{\norm{\mathbf1}^2}
-\ip{e_i+e_k}{\xi}^2\right)\\
&=p\left(2-\frac4n-(\xi_i+\xi_k)^2\right).
\end{align*}
Thus, with $u=\xi_i+\xi_k$ and $c_n^2=2-4/n$,
\begin{equation}\label{eq:rho-pair}
|\rho_i+\rho_k|^2\le p(c_n^2-u^2).
\end{equation}
The defining relation for $\rho$ also gives
\[
(\xi_i-\xi_k)^2
=2(\xi_i^2+\xi_k^2)-u^2
=\frac4n+2(\rho_i+\rho_k)+2\alpha u-u^2.
\]
Dividing by $3$, we bound the contribution of $\rho_i+\rho_k$ by \eqref{eq:rho-pair} and Young's inequality:
\[
\frac23\sqrt p\sqrt{c_n^2-u^2}
\le\frac5d p+\frac d{45}(c_n^2-u^2).
\]
Here $2\sqrt{(5/d)(d/45)}=2/3$. For the remaining terms, completing the square gives
\begin{align*}
\frac23\alpha u-\left(\frac13+\frac d{45}\right)u^2
&=\frac23\alpha u-\frac{15+d}{45}u^2\\
&=\frac{5\alpha^2}{15+d}
-\frac{15+d}{45}
\left(u-\frac{15\alpha}{15+d}\right)^2\\
&\le\frac{5\alpha^2}{15+d}
=\frac{5z}{15+d}.
\end{align*}
Together with $\alpha^2=z$, this gives
\begin{equation}\label{eq:pairwise-final}
\frac{(\lambda_i-\lambda_k)^2}{3S}
\le \frac4{3n}+\frac5d p
+\frac{(2-\frac4n)d}{45}+\frac{5z}{15+d}.
\end{equation}
It also holds for $i=k$.

To apply this estimate to $d^\nabla\Theta$, set
\[
W_{ijk}=\left(\lambda_i+\lambda_j+\lambda_k-\frac54a\right)^2h_{ijk}^2.
\]
Since $W_{ijk}$ is nonnegative and symmetric in its indices, averaging over the three pairs gives
\begin{align*}
\norm{d^\nabla\Theta}^2
&=\frac13\sum_{i,j,k}
\bigl[(\lambda_i-\lambda_j)^2+(\lambda_j-\lambda_k)^2
+(\lambda_k-\lambda_i)^2\bigr]W_{ijk}.
\end{align*}
If three real numbers are ordered as $u\ge v\ge w$, then
$(u-v)^2+(v-w)^2\le(u-w)^2$; hence the sum of their three squared pairwise differences is at most twice the largest one. Thus \eqref{eq:pairwise-final} gives
\[
\norm{d^\nabla\Theta}^2
\le
2S\left[
\frac4{3n}+\frac5d p
+\frac{(2-\frac4n)d}{45}
+\frac{5z}{15+d}
\right]
\sum_{i,j,k}W_{ijk}.
\]
By \eqref{eq:omega-r},
\[
\sum_{i,j,k}W_{ijk}=3S^3\omega_{5/4}.
\]
Consequently,
\begin{equation}\label{eq:dTheta-bound}
\frac{\norm{d^\nabla\Theta}^2}{6S^4}
\le
\left[
\frac4{3n}+\frac5d p
+\frac{(2-\frac4n)d}{45}
+\frac{5z}{15+d}
\right]\omega_{5/4}.
\end{equation}
Integrating \eqref{eq:dTheta-bound} and inserting it into \eqref{eq:T-Weitz-final} gives
\begin{align}
(1-t)T
\le{}&
\left[
\frac4{3n}
+\frac{(2-\frac4n)d}{45}
+\frac{5z}{15+d}
\right]\fint_M\omega_{5/4}\notag\\
&+\frac5d\fint_Mp\,\omega_{5/4}
+\fint_M(p-x)^2.
\label{eq:T-before-cov}
\end{align}
The variance is canceled by \eqref{eq:omega-cov}: together with \eqref{eq:omega-average} and \eqref{eq:A54}, it gives
\begin{align*}
\frac5d\fint_Mp\,\omega_{5/4}
&=\frac{5x}{d}\fint_M\omega_{5/4}
+\frac5d\fint_M(p-x)\omega_{5/4}\\
&=\frac{xA_{5/4}}d-\fint_M(p-x)^2
-\frac1{dS}\fint_M\norm{\nabla p}^2.
\end{align*}
Substituting in \eqref{eq:T-before-cov} cancels $\fint_M(p-x)^2$. Since the gradient term is nonpositive and $\fint_M\omega_{5/4}=A_{5/4}/5$, it follows that
\begin{align*}
(1-t)T
&\le A_{5/4}\left[
\frac xd+\frac4{15n}
+\frac{(2-\frac4n)d}{225}
+\frac z{15+d}\right].
\end{align*}
Dividing by $1-t$ proves the proposition.
\end{proof}

\section{The fourth-order estimate}\label{sec:fourth}

The estimate for $\Theta$ will be used in taking the trace-free part of a fourth-order tensor. Let
\[
U_{ijkl}=\frac14\bigl(h_{ijkl}+h_{jkli}+h_{klij}+h_{lijk}\bigr)
\]
be the completely symmetric part of $\nabla^2h$. For symmetric two-tensors $A,B$ write
\[
(A\odot B)_{ijkl}
=A_{ij}B_{kl}+A_{ik}B_{jl}+A_{il}B_{jk}
+A_{jk}B_{il}+A_{jl}B_{ik}+A_{kl}B_{ij},
\]
and put
\[
G_{ijkl}=g_{ij}g_{kl}+g_{ik}g_{jl}+g_{il}g_{jk}.
\]
For a completely symmetric fourth-order tensor $Q$, we use
\[
(\tr Q)_{kl}=\sum_iQ_{iikl},
\qquad
\tr^2Q=\tr(\tr Q).
\]
Write $A=\tr Q$ and $\tau=\tr^2Q$. The trace-free projection is the symmetric fourth-order case of the minimal-norm projection in \cite[Corollary~1.4]{GuoGuan}. Indeed,
\[
\tr(g\odot A)=(n+4)A+\tau g,
\qquad
\tr G=(n+2)g,
\]
so the trace-free part of $Q$ is
\begin{equation}\label{eq:GG-projection}
Q_0
=Q-\frac1{n+4}g\odot A
+\frac{\tau}{(n+4)(n+2)}G.
\end{equation}
Moreover,
\[
\ip{Q}{g\odot A}=6\norm A^2,
\qquad
\ip{Q}{G}=3\tau.
\]
Since $Q_0$ is orthogonal to every pure-trace tensor, taking the inner
product of \eqref{eq:GG-projection} with $Q$ gives
\begin{equation}\label{eq:GG-norm}
\norm{Q_0}^2
=\norm Q^2-\frac6{n+4}\norm A^2
+\frac{3}{(n+4)(n+2)}\tau^2.
\end{equation}
Polarization gives, for completely symmetric tensors $Q,R$,
\begin{equation}\label{eq:GG-polarization}
\ip{Q_0}{R_0}
=\ip QR-\frac6{n+4}\ip{\tr Q}{\tr R}
+\frac{3}{(n+4)(n+2)}(\tr^2Q)(\tr^2R).
\end{equation}

For $U$, Codazzi shows that two of the four terms in $\sum_i U_{iikl}$ equal
$\Delta h_{kl}$ and the other two vanish by minimality. Thus
$\tr U=\Delta h/2$, and Simons' equation yields
\begin{equation}\label{eq:fourth-traces}
\tr U=-\frac{S-n}{2}h=-\frac{St}{2}h,
\qquad
\tr^2U=0.
\end{equation}
The tensors to be compared with $U$ are
\[
X=h\odot\Phi,
\qquad
Y=h\odot h,
\qquad
V=X-\frac a4Y,
\qquad
V_0=(V)_0.
\]
Commutativity of $h$ and $\Phi$, together with $\tr h=\tr\Phi=\ip h\Phi=0$, gives
\[
\tr X=4h\Phi,
\quad \tr^2X=0,
\qquad
\tr Y=4h^2,
\quad \tr^2Y=4S.
\]
In particular,
\begin{equation}\label{eq:V-traces}
\tr V=4h\Phi-ah^2,
\qquad
\tr^2V=-aS.
\end{equation}
Writing $B^\circ=B-(\tr B)I/n$ for a symmetric two-tensor, we also have
\[
(\tr V)^\circ=4\Theta+4S\left(x-\frac z4\right)h.
\]
The trace of $V$ thus involves the tensor estimated in Proposition~\ref{prop:cubic}.

\begin{proposition}\label{prop:fourth}
Suppose $n<S\le2n$. With the notation above,
\begin{align}
\mathcal U
:=\fint_M\frac{\norm{U_0}^2}{S^3}
&=t(2t-1)+\frac{6t^2}{n(n+4)}+\frac32x,
\label{eq:calU}\\
\mathcal M
:=\fint_M\frac{\ip{U_0}{V_0}}{S^3}
&=\frac{12t}{5n}
+\frac{3(9n+16)}{20(n+4)}tz
+\left(\frac{3d}{5}+\frac{12t}{n+4}\right)x,
\label{eq:calM}\\
\fint_M\frac{\norm{V_0}^2}{S^3}
&=6x+\frac{24n}{n+4}
\left[\left(x-\frac z4\right)^2+T\right]
+\frac{3(n+4)}{4(n+2)}z.
\label{eq:V0norm}
\end{align}
Consequently,
\begin{equation}\label{eq:V0D}
\fint_M\frac{\norm{V_0}^2}{S^3}\le D,
\end{equation}
where
\[
D=
6x+\frac{24n}{n+4}
\left[
\left(x-\frac z4\right)^2+KA_{5/4}
\right]
+\frac{3(n+4)}{4(n+2)}z.
\]
\end{proposition}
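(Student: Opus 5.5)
The plan is to reduce all three identities to the polarization formulas \eqref{eq:GG-norm} and \eqref{eq:GG-polarization}, and then evaluate the full norms $\norm U^2$, $\ip UV$, $\norm V^2$ and the trace contractions separately. By \eqref{eq:fourth-traces} we have $\tr^2U=0$, so the double-trace terms drop out of $\mathcal U$ and $\mathcal M$. Hence
\[
\norm{U_0}^2=\norm U^2-\tfrac{6}{n+4}\norm{\tr U}^2,\qquad
\ip{U_0}{V_0}=\ip UV-\tfrac{6}{n+4}\ip{\tr U}{\tr V}.
\]
The trace pieces are algebraic. By \eqref{eq:fourth-traces}, $\norm{\tr U}^2=S^3t^2/4$. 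By \eqref{eq:V-traces}, $\ip{\tr U}{\tr V}=-\tfrac{St}{2}\ip h{4h\Phi-ah^2}=-\tfrac{St}{2}(4S^2p-a^2S)$, using $\ip h{h\Phi}=\norm\Phi^2=S^2p$ and $\ip h{h^2}=f_3=aS$. Averaging produces the $x$- and $tz$-contributions with denominator $n+4$.

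For $\norm U^2$, write $\nabla^2h=U+(\nabla^2h-U)$. The non-symmetric part is a linear combination of the commutators $h_{ij,kl}-h_{ij,lk}$, which are algebraic in $h$ by \eqref{eq:Ricci-commutation} and the Gauss equation. So $\norm U^2=\norm{\nabla^2h}^2-(\text{quadratic curvature terms})$, where the correction is an explicit polynomial in $\lambda_i$ and in $S,f_3,f_4$. Next I integrate $\norm{\nabla^2h}^2$ by parts as $-\int h_{ij,k}h_{ij,kll}$, commute derivatives to reach $\nabla\Delta h=(n-S)\nabla h$, and collect the curvature terms. These reduce to $\norm{\nabla h}^2=S(S-n)$, $\mathfrak A$ and $\mathfrak B$. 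I then eliminate $\mathfrak A,\mathfrak B$ by \eqref{eq:A-2B} and by the integrated form of \eqref{eq:lap-f4}, that is $(S-n)\int f_4=\int(2\mathfrak A+\mathfrak B)$. Everything is then expressed through \eqref{eq:p-formula} in terms of $t$, $z$ and $x$. The $z$-terms should cancel, leaving $t(2t-1)+\tfrac32x$ plus the trace correction.

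For $\ip UV$, the full symmetry of $V$ gives $\ip UV=\ip{\nabla^2h}V$. Expanding $h\odot\Phi$ and $h\odot h$, the contraction consists of terms like $h_{ij,kl}h_{ij}\Phi_{kl}$, $h_{ij,kl}h_{ik}\Phi_{jl}$, and so on. I integrate each by parts once, using $\diver h=\diver\Phi=0$ (established in the proof of Proposition~\ref{prop:cubic}), Codazzi, and $\Delta h=(n-S)h$. This turns every term into first-order quantities: $\sum\lambda_i h_{ijk}^2$-type sums such as $\mathfrak C$, the sums $\mathfrak A$, $\mathfrak B$, and $\Phi$-weighted sums that are again polynomial in the $\lambda$'s. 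These are reduced with \eqref{eq:grad-h}--\eqref{eq:A-2B} and \eqref{eq:omega-average}, and the latter is where $d=3-4t$ enters. I expect this bookkeeping, together with that for $\norm U^2$, to be the main obstacle. The task is to make sure every fourth-order integrand collapses to the averages $x$ and $z$, and does not leave an uncontrolled quantity such as $\fint\omega_r$. If it does leave one, I would remove it by choosing $r$ in \eqref{eq:omega-average} to match the coefficient.

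The identity \eqref{eq:V0norm} is purely pointwise algebra followed by averaging. In a common principal frame of $h$ and $\Phi$, compute $\norm V^2$ from the $\odot$ norm: for commuting diagonal $A,B$, $\norm{A\odot B}^2$ is a combination of $\norm A^2\norm B^2$, $\ip AB^2$ and $\norm{AB}^2$. Then subtract $\tfrac6{n+4}\norm{\tr V}^2$ and add $\tfrac3{(n+4)(n+2)}a^2S^2$. For the middle term, use $\norm{\tr V}^2=\norm{(\tr V)^\circ}^2+(\tr^2V)^2/n$ with $(\tr V)^\circ=4\Theta+4S(x-z/4)h$. On averaging, the cross term is killed by $\fint\ip h\Theta=0$, which leaves $16S^3[T+(x-z/4)^2]$. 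Finally, \eqref{eq:V0D} follows immediately from \eqref{eq:V0norm} and Proposition~\ref{prop:cubic}, since the coefficient $24n/(n+4)$ of $T$ is positive.
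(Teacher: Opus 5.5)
Your plan is correct and is essentially the paper's proof. Both use the same reduction through \eqref{eq:GG-norm}--\eqref{eq:GG-polarization} with $\tr^2U=0$, the Peng--Terng identity for $\norm{\nabla^2h}^2$ minus the algebraic commutator defect $\norm{\nabla^2h-U}^2$, a single integration by parts for $\ip UV$ closed by \eqref{eq:omega-average} at $r=0$, and pointwise algebra plus $\fint_M\ip h\Theta=0$ for $V_0$; the one difference is that the paper quotes the Peng--Terng identity pointwise from the literature instead of re-deriving it by integration by parts.

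Two of your forecasts need correcting. First, $\norm U^2/S^3$ also contains the constant $\frac{3t^2}{2n}$, which combines with the trace correction $-\frac{3t^2}{2(n+4)}$ to give $\frac{6t^2}{n(n+4)}$. Second, since $V=h\odot(\Phi-\frac{a}{4}h)$ and $\tr V=4h(\Phi-\frac{a}{4}h)$, the $\norm{AB}^2$ term in $\norm V^2$ equals $\frac1{16}\norm{\tr V}^2$, so it must also be rewritten through $\Theta$; that is how the coefficient $\frac{24n}{n+4}=24-\frac{96}{n+4}$ of $T+(x-\frac z4)^2$ arises.
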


\begin{proof}
For $\mathcal U$, the Peng--Terng identity \cite{PTseminar,PTannalen}, in the form recorded in \cite[Section~2]{CWY}, reads
\[
\norm{\nabla^2h}^2
=S(S-n)(S-2n-3)+3(\mathfrak A-2\mathfrak B).
\]
To compare $\nabla^2h$ with its completely symmetric part $U$, put
$E_{ijkl}=h_{ij,kl}-h_{ij,lk}$. Since $\nabla^2h$ is symmetric in its first three indices, expansion of its orthogonal projection onto the completely symmetric tensors gives
\[
\norm{\nabla^2h-U}^2=\frac38\sum_{i,j,k,l}E_{ijkl}^2.
\]
In a principal frame, Ricci commutation and the Gauss equation yield
\[
E_{ijkl}=(\lambda_i-\lambda_j)R_{ijkl},
\qquad
R_{ijkl}=(1+\lambda_i\lambda_j)
(\delta_{ik}\delta_{jl}-\delta_{il}\delta_{jk}).
\]
Hence
\[
\sum_{i,j,k,l}E_{ijkl}^2
=2\sum_{i\ne j}(\lambda_i-\lambda_j)^2(1+\lambda_i\lambda_j)^2.
\]
Minimality gives
\[
\sum_{i,j}(\lambda_i-\lambda_j)^2(1+\lambda_i\lambda_j)^2
=2nS-4S^2+2Sf_4-2f_3^2.
\]
Therefore
\begin{equation}\label{eq:sym-defect}
\norm{\nabla^2h-U}^2
=\frac32\bigl(Sf_4-f_3^2-2S^2+nS\bigr).
\end{equation}
This decomposition is also used in \cite[Section~3]{CWY} and \cite[Lemma~3.1]{GeTanYanZhang}. Subtracting \eqref{eq:sym-defect} from the Peng--Terng identity and using
\eqref{eq:A-2B} gives
\[
\norm U^2
=S(S-n)\left(S-2n-\frac32\right)
+\frac32\bigl(Sf_4-f_3^2-S^2\bigr).
\]
After division by $S^3$, \eqref{eq:p-formula} and
$S^{-1}=(1-t)/n$ give
\begin{align}
\frac{\norm U^2}{S^3}
&=t\left(2t-1-\frac{3}{2S}\right)
+\frac32\left(p+\frac tn\right)\notag\\
&=\frac32p+t(2t-1)+\frac{3t^2}{2n}.
\label{eq:U-pretrace}
\end{align}
Since $\tr^2U=0$, \eqref{eq:GG-norm} and \eqref{eq:fourth-traces} give
\[
\frac{\norm{U_0}^2}{S^3}
=\frac{\norm U^2}{S^3}-\frac6{n+4}\frac{\norm{\tr U}^2}{S^3}
=\frac{\norm U^2}{S^3}-\frac{3t^2}{2(n+4)}.
\]
Averaging and substituting \eqref{eq:U-pretrace} proves \eqref{eq:calU}.

For the mixed term, recall that $\diver h=\diver\Phi=0$ by the proof of Proposition~\ref{prop:cubic}. Since $U$ is completely symmetric,
\[
\ip{U}{X}
=3\bigl(h_{ij,kl}h_{ij}\Phi_{kl}
+h_{ij,kl}h_{il}\Phi_{jk}\bigr).
\]
For the first term, integration by parts and $\diver\Phi=0$ yield
\[
\fint_M h_{ij,kl}h_{ij}\Phi_{kl}
=-\fint_M h_{ij,k}h_{ij,l}\Phi_{kl}.
\]
Evaluating in a principal frame and using the symmetry of $h_{ijk}$, we obtain
\begin{align*}
\fint_M h_{ij,kl}h_{ij}\Phi_{kl}
&=-\fint_M\sum_{i,j,k}
\left(\lambda_k^2-a\lambda_k-\frac Sn\right)h_{ijk}^2\\
&=\fint_M\left(-\mathfrak A+a\mathfrak C
+\frac Sn\norm{\nabla h}^2\right).
\end{align*}
Similarly, using $\diver h=0$ and
$\Phi_{jk,l}=(\lambda_j+\lambda_k-a)h_{jkl}$,
\begin{align*}
\fint_M h_{ij,kl}h_{il}\Phi_{jk}
&=-\fint_M h_{ij,k}h_{il}\Phi_{jk,l}\\
&=-\fint_M\sum_{i,j,k}
\lambda_i(\lambda_j+\lambda_k-a)h_{ijk}^2\\
&=\fint_M(-2\mathfrak B+a\mathfrak C).
\end{align*}
By the definition of $\omega_r$ and the complete symmetry of $h_{ijk}$,
\[
\omega_0=\frac{\mathfrak A+2\mathfrak B}{S^3}.
\]
Consequently,
\begin{align*}
\fint_M\frac{\ip{U}{X}}{S^3}
&=
3\fint_M
\left[
-\frac{\mathfrak A+2\mathfrak B}{S^3}
+\frac{2a\mathfrak C}{S^3}
+\frac{\norm{\nabla h}^2}{nS^2}
\right]\\
&=
3\left[
-\fint_M\omega_0
+tz+\frac tn
\right].
\end{align*}
Using \eqref{eq:omega-average} with $r=0$,
\[
5\fint_M\omega_0=t\left(\frac1n+4z\right)-dx,
\]
we obtain
\begin{equation}\label{eq:UX}
\fint_M\frac{\ip{U}{X}}{S^3}
=\frac{12t}{5n}+\frac35tz+\frac{3d}{5}x.
\end{equation}
The other mixed term is $\ip UY=6h_{ij,kl}h_{ij}h_{kl}$. Integration by parts in $l$ gives
\[
\fint_M\ip UY
=-6\fint_M h_{ij,k}h_{ij,l}h_{kl}
=-6\fint_M\mathfrak C=-3(S-n)f_3.
\]
Since $V=X-aY/4$ and $a^2/S=z$, combining this identity with \eqref{eq:UX} yields
\begin{equation}\label{eq:UV}
\fint_M\frac{\ip{U}{V}}{S^3}
=\frac{12t}{5n}+\frac{27}{20}tz+\frac{3d}{5}x.
\end{equation}
For the trace correction, \eqref{eq:fourth-traces}, \eqref{eq:V-traces}, and $\ip{h^2}{\Phi}=\norm\Phi^2$ give
\begin{align*}
\frac{\ip{\tr U}{\tr V}}{S^3}
&=-\frac{St}{2S^3}
\left(4\ip{h}{h\Phi}-a\ip{h}{h^2}\right)\\
&=-\frac t2(4p-z),
\end{align*}
while $(\tr^2U)(\tr^2V)=0$. Hence \eqref{eq:GG-polarization} gives
\[
\frac{\ip{U_0}{V_0}}{S^3}
=\frac{\ip{U}{V}}{S^3}+\frac{3t}{n+4}(4p-z).
\]
Averaging and using \eqref{eq:UV} gives \eqref{eq:calM}, with the coefficient of $tz$ equal to $27/20-3/(n+4)=3(9n+16)/(20(n+4))$.

The computation of $\norm{V_0}^2$ uses the contraction formula
\[
\norm{A\odot B}^2
=6\norm A^2\norm B^2+6\ip AB^2+24\tr(A^2B^2).
\]
Polarizing in $B$ and using $\ip h\Phi=0$ yields
\[
\begin{aligned}
\norm X^2&=6S\norm\Phi^2+24\norm{h\Phi}^2,\\
\ip XY&=24\ip{h^3}{\Phi},
&\norm Y^2&=12S^2+24f_4.
\end{aligned}
\]
Substituting these contractions and \eqref{eq:V-traces} in \eqref{eq:GG-norm}, and using
\[
\ip{h^3}{\Phi}=\ip{h\Phi}{\Phi}+a\norm\Phi^2,
\qquad f_4=\norm\Phi^2+a^2S+\frac{S^2}{n},
\]
we obtain
\begin{align*}
\norm{V_0}^2
={}&6S\norm\Phi^2
+\frac{24n}{n+4}\left(\norm{h\Phi}^2-\frac a2\ip{h\Phi}{\Phi}\right)
-\frac{21n}{2(n+4)}a^2\norm\Phi^2\\
&+\frac{3n}{2(n+4)}a^4S
+\frac{3(n+4)}{4(n+2)}a^2S^2.
\end{align*}
Completing the square in $h\Phi$ and dividing by $S^3$ gives
\begin{equation}\label{eq:V0-pointwise}
\frac{\norm{V_0}^2}{S^3}
=6p+\frac{24n}{n+4}
\left[\frac{\norm{(h-\frac a4I)\Phi}^2}{S^3}
-\frac z2p+\frac{z^2}{16}\right]
+\frac{3(n+4)}{4(n+2)}z.
\end{equation}
Finally, $(h-aI/4)\Phi=\Theta+Sxh$ and \eqref{eq:hTheta} imply
\[
\fint_M\frac{\norm{(h-\frac a4I)\Phi}^2}{S^3}=T+x^2.
\]
Averaging \eqref{eq:V0-pointwise} proves \eqref{eq:V0norm}.
The upper bound \eqref{eq:V0D} follows from Proposition~\ref{prop:cubic}.
\end{proof}

\section{Proof of Theorem~\ref{thm:main}}\label{sec:completion}

\begin{proof}
Suppose that $n<S<2n$, so that $0<t<1/2$ and $d=3-4t>1$. The normalization $y=x/(2t)$ permits comparison with the estimates at $t=1/2$. By \eqref{eq:scalar-admissible} and the average of \eqref{eq:moment-region},
\begin{equation}\label{eq:y-region}
0\le z\le\frac{n-2}{n}<1,
\qquad
0\le y\le\frac1{2d}\left(\frac1n+\frac z4\right)
\le y_+(z):=\frac1{2n}+\frac z8.
\end{equation}
In particular, $y\le(n+2)/(8n)\le5/24<1/3$.

We first bound $K$ in \eqref{eq:Kdef} in terms of $n$ alone. The inequalities
\[
\frac1{1-t}\le2,\qquad
\frac{d}{1-t}\le3,\qquad d\ge1,\qquad 15+d\ge16
\]
imply
\[
K\le 2x+\frac z8+\frac8{15n}+\frac{2-4/n}{75}.
\]
Since $t/d\le1/2$, the scalar constraints also imply
$x-z/8\le1/(2n)$ and $2x+z\le(n-2)/n$. Thus
\[
2x+\frac z8
=\frac75\left(x-\frac z8\right)+\frac3{10}(2x+z)
\le\frac{3n+1}{10n},
\]
and hence
\begin{equation}\label{eq:K-uniform}
K\le\frac{49n+87}{150n}<C_n:=\frac{n+2}{3n}.
\end{equation}
Set $L_n=24n/(n+4)$ and $\eta=1/n+17z/48$. Since
$A_{5/4}=t(\eta-2dy)\ge0$, Proposition~\ref{prop:fourth} and \eqref{eq:K-uniform} give
\begin{equation}\label{eq:Dt}
\fint_M\frac{\norm{V_0}^2}{S^3}\le D\le D_t,
\end{equation}
where
\[
D_t=12ty+L_n\left[\left(2ty-\frac z4\right)^2
+C_nt(\eta-2dy)\right]+\frac{3(n+4)}{4(n+2)}z.
\]

With $y$ and $z$ fixed, denote the values of $\mathcal U/(4t^2)$, $\mathcal M/(2t)$, and $D_t$ at $t=1/2$ by $U_*$, $M_*$, and $D_*$. Explicitly,
\begin{align*}
U_*&=\frac32y+\frac{3}{2n(n+4)},\\
M_*&=\frac6{5n}+\frac{3(9n+16)}{40(n+4)}z
+\frac{3(n+14)}{5(n+4)}y,\\
D_*&=6y+L_n\left[\left(y-\frac z4\right)^2
+C_n\left(\frac\eta2-y\right)\right]
+\frac{3(n+4)}{4(n+2)}z.
\end{align*}
Substituting $x=2ty$ into \eqref{eq:calU} and \eqref{eq:calM}, we find
\begin{align}
\frac{\mathcal U}{4t^2}-U_*
&=\frac{1-2t}{4t}(3y-1)<0,\label{eq:Ucompare}\\
\frac{\mathcal M}{2t}-M_*
&=\frac{6(n-1)}{5(n+4)}(1-2t)y\ge0.\label{eq:Mcompare}
\end{align}
For $D_t$, subtraction gives
\[
\frac{D_*-D_t}{1-2t}
=6y+L_n\left[(1+2t)y^2-\frac{yz}{2}
+C_n\left(\frac\eta2+(4t-1)y\right)\right].
\]
Here \eqref{eq:y-region} implies
\[
\frac\eta2-dy\ge\frac{5z}{96},
\qquad
\frac\eta2+(4t-1)y=\frac\eta2-dy+2y\ge2y.
\]
Since $2C_n-z/2>1/6$, it follows that
\[
\frac{D_*-D_t}{1-2t}
\ge6y+L_n\left[(1+2t)y^2+\left(2C_n-\frac z2\right)y\right]\ge0.
\]
Thus $D_t\le D_*$. Applying \eqref{eq:Dt} and \eqref{eq:Ucompare}--\eqref{eq:Mcompare} to $V_0-(2+z)U_0/(2t)$, we obtain
\begin{align}
0&\le\fint_M\frac{\left|V_0-\frac{2+z}{2t}U_0\right|^2}{S^3}\notag\\
&\le D_t-2(2+z)\frac{\mathcal M}{2t}
+(2+z)^2\frac{\mathcal U}{4t^2}\notag\\
&<D_*-2(2+z)M_*+(2+z)^2U_*=:Q(y,z).
\label{eq:square-final}
\end{align}
Strictness follows from \eqref{eq:Ucompare} and $2+z>0$.

For fixed $z$, $Q$ is convex in $y$, since $Q_{yy}=48n/(n+4)>0$. It therefore suffices to check the endpoints of $[0,y_+(z)]$. At $y=0$,
\[
Q(0,z)=-\frac{2(2n+13)}{5n(n+4)}-b_nz+a_nz^2,
\]
where
\[
a_n=\frac{3(n^2-16n+10)}{20n(n+4)},
\qquad
b_n=\frac{8n^3+14n^2+5n+108}{15n(n+2)(n+4)}.
\]
For $n\ge3$ one has $a_n\le3/20$, while
\[
b_n\ge\frac1{15}\frac{n}{n+2}\frac{8n+14}{n+4}\ge\frac15.
\]
Using $z^2\le z$, we obtain
\begin{equation}\label{eq:Q-left-negative}
Q(0,z)\le-\frac{2(2n+13)}{5n(n+4)}-\frac z{20}<0.
\end{equation}
At $y=y_+(z)$, substitution gives
\[
Q(y_+(z),z)
=z\left[\frac3{16}z^2
-\frac{3(n^2+2n-12)}{8n(n+4)}z
-\frac{(n-1)(n+5)}{3(n+2)(n+4)}\right].
\]
The coefficient $n^2+2n-12$ is positive for $n\ge3$, and
\[
\frac{(n-1)(n+5)}{3(n+2)(n+4)}
>\frac{n-1}{3(n+2)}
>\frac{n-2}{4n}\ge\frac z4;
\]
the middle inequality follows from
$4n(n-1)-3(n+2)(n-2)=(n-2)^2+8>0$.
Consequently,
\begin{equation}\label{eq:Q-right-negative}
Q(y_+(z),z)\le z\left(\frac3{16}z^2-\frac z4\right)
\le-\frac{z^2}{16}\le0.
\end{equation}
Writing $y=\theta y_+(z)$, $0\le\theta\le1$, convexity now gives
\[
Q(y,z)\le(1-\theta)Q(0,z)+\theta Q(y_+(z),z)\le0,
\]
contrary to \eqref{eq:square-final}. This excludes $n<S<2n$.

Finally, suppose $S=2n$. Then $t=1/2$, $d=1$, and $y=x$. The constraints \eqref{eq:y-region} and the estimates through \eqref{eq:Dt} still hold, and
$\mathcal U=U_*$, $\mathcal M=M_*$, and $D_t=D_*$. Thus
\begin{equation}\label{eq:equality-square}
0\le\fint_M\frac{\norm{V_0-(2+z)U_0}^2}{S^3}\le Q(y,z).
\end{equation}
If $z>0$, both endpoint values of $Q$ are negative by
\eqref{eq:Q-left-negative} and \eqref{eq:Q-right-negative}, contradicting
\eqref{eq:equality-square}. Hence $z=0$ and $f_3=0$.
For $z=0$, the left endpoint value is negative and the right endpoint value is zero. By convexity, $y=y_+(0)$ is the only possibility consistent with \eqref{eq:equality-square}. Thus
\[
x=y=y_+(0)=\frac1{2n},
\qquad A_{5/4}=t\left(\frac1n+\frac{17z}{48}\right)-dx=0.
\]
Proposition~\ref{prop:cubic} gives $T=0$. The integrand is nonnegative and continuous, so $\Theta$ vanishes on every component. With $a=0$, $S/n=2$, and $Sx=1$, we obtain
\[
0=\Theta=h(h^2-2I)-h=h^3-3h.
\]
Each principal curvature is one of $-\sqrt3,0,\sqrt3$.
If the respective multiplicities are $m_-,m_0,m_+$, minimality and
$S=2n$ imply $m_+=m_-$ and $3(m_++m_-)=2n$; hence all three
multiplicities equal $n/3$. Each connected component is thus isoparametric
with three principal curvatures. Cartan's classification
\cite{Cartan1939,Cecil2008} gives $n/3\in\{1,2,4,8\}$ and local congruence
to the corresponding minimal Cartan hypersurface. These hypersurfaces have $S=2n$, which also proves the converse.
\end{proof}

\raggedbottom

\end{document}